\documentclass[11pt]{article}
\usepackage{color,soul}
\usepackage{titlesec}
\usepackage{hyperref}
\usepackage{dirtytalk}
\usepackage{mathrsfs}
\usepackage{url}
\usepackage{lipsum}
\usepackage[thinlines]{easytable}
\usepackage{ntheorem}
\usepackage{mathtools}
\usepackage{graphicx}
\usepackage{amssymb}
\usepackage{calc}

\newlength{\depthofsumsign}
\setlength{\depthofsumsign}{\depthof{$\sum$}}

\usepackage{algorithm2e}
\setlength{\textwidth}{6.5in}
\setlength{\textheight}{8.5in}
\hoffset=-1.2in
\voffset=-0.5in
\RestyleAlgo{ruled}
\SetKwInput{KwData}{The data}
\SetKwInput{KwResult}{The result}
%
\usepackage{graphicx}
\usepackage{amsmath}
\usepackage{amssymb}
\usepackage{amsfonts}
\usepackage{array}
\usepackage{longtable}
\usepackage{tikz}
\usepackage{hhline}

\usepackage{multirow}
\usepackage{subfig}
\usepackage{hyperref}
\usepackage{mathtools}
\usepackage{tikz}
\usepackage{makecell}
\usepackage{pgfplots}
\usepackage{url}
\usepackage{afterpage}
\usepackage{fancyhdr}
\usepackage{setspace}
\usepackage{bm}
\DeclareMathAlphabet{\mathpzc}{OT1}{pzc}{m}{it}

\usepackage{diagbox}

\newtheorem{proposition} {Proposition}
\newtheorem*{proposition-non}{Proposition}

\newtheorem{remark}{Remark}

\usepackage{bm}

\allowdisplaybreaks

\newenvironment{proof}{\noindent{\bf Proof:}\indent}%
                      {\hfill $\Box$\par}

\newcommand{\sym}[1]{{\sf #1}}
\title{Counting unate and balanced monotone Boolean functions}
\author{Aniruddha Biswas and Palash Sarkar \\
Indian Statistical Institute \\
203, B.T.Road, Kolkata \\
India 700108. \\
Email: \{anib\_r, palash\}@isical.ac.in
}
\date{\today}

\begin{document}

\maketitle

\begin{abstract}
	We show that the problem of counting the number of $n$-variable unate functions reduces to the problem of counting the number of $n$-variable monotone functions.
	Using recently obtained results on $n$-variable monotone functions, we obtain counts of $n$-variable unate functions up to $n=9$. We use an enumeration strategy
	to obtain the number of $n$-variable balanced monotone functions up to $n=7$. We show that the problem of counting the number of $n$-variable balanced unate
	functions reduces to the problem of counting the number of $n$-variable balanced monotone functions, and consequently, we obtain the number of $n$-variable
	balanced unate functions up to $n=7$. Using enumeration, we obtain the numbers of equivalence classes of $n$-variable balanced monotone functions,  unate functions and balanced unate functions up to $n=6$. Further, for each of the considered sub-class
	of $n$-variable monotone and unate functions, we also obtain the corresponding numbers of $n$-variable non-degenerate functions. \\
	{\bf Keywords:} Boolean function, unate function, monotone function, Dedekind number, non-degenerate function, balanced functions, equivalence classes
	of functions. \\
	{\bf MSC:} 05A99.
\end{abstract}

\section{Introduction \label{sec-intro} }
For a positive integer $n$, an $n$-variable Boolean function $f$ is a map $f:\{0,1\}^n\rightarrow \{0,1\}$. A Boolean function $f$ is said to be monotone increasing
(resp. decreasing) in the $i$-th variable if 
$$f(x_1,\ldots,x_{i-1},0,x_{i+1},\ldots,x_n)\leq f(x_1,\ldots,x_{i-1},1,x_{i+1},\ldots,x_n)$$ 
$$(\mbox{resp. }f(x_1,\ldots,x_{i-1},0,x_{i+1},\ldots,x_n)\geq f(x_1,\ldots,x_{i-1},1,x_{i+1},\ldots,x_n))$$
for all possible $x_1,\ldots,x_{i-1},x_{i+1},\ldots,x_n\in\{0,1\}$. 
The function $f$ is said to be locally monotone or unate, if for each $i\in \{1,\ldots,n\}$, it is either monotone increasing or monotone decreasing in the $i$-th
variable. The function $f$ is said to be monotone increasing (or, simply monotone) if for each $i\in \{1,\ldots,n\}$, it is monotone increasing in the $i$-th variable.

Unate functions have been studied in the literature from various viewpoints such as switching 
theory~\cite{switch1_kohavi1970switching,switch2_mcnaughton1961unate,switch3_thayse1976logic,switch4_matheson1971recognition,switch5_betancourt1971derivation,switch6_pitchumani1988functional}, 
combinatorial aspects~\cite{switch3_thayse1976logic,comb1_feigelson1997forbidden,comp4_hata1991complexity}, and complexity theoretic 
aspects~\cite{comp1_balogh2022nearly,comp2_zwick19914n,comp3_morizumi2014sensitivity,comp4_hata1991complexity}. Monotone functions have been studied much 
more extensively than unate functions and have many applications so much so that it is difficult to mention only a few representative works. 

A Boolean function is degenerate on some variable if its output does not depend on the variable, and it is said to be non-degenerate if it is not degenerate on any variable. 
A Boolean function is said to be balanced if it takes the values 0 and 1 equal number of times. 
Two Boolean functions on the same number of variables are said to be equivalent if one can be obtained from the other by a permutation of variables. 
The notion of equivalence partitions the set of Boolean functions into equivalence classes. 

The number of $n$-variable monotone Boolean functions is called the $n$-th Dedekind number $D(n)$ after Dedekind who posed the problem in 1897. Till date, the $n$-th Dedekind
numbers has been obtained only up to $n=9$ (see~\cite{ListINtSequence,Dedekind1897103,Church,Ward,berman,Wiedemann19915,DBLP:journals/ipl/FidytekMSS01,jakel2023computation,vanhirtum2023computation}). 
A closed form summation formula for $D(n)$ was given in~\cite{Kisielewicz1988139}, though it was pointed out in~\cite{Korshunov2003929} that using the formula to
compute $D(n)$ has the same complexity as direct enumeration of all $n$-variable monotone Boolean functions. Dedekind numbers form the entry A000372 of~\cite{ListINtSequence}.
The number of $n$-variable non-degenerate Boolean functions can be obtained as the inverse binomial transform of the Dedekind numbers and are hence
also known up to $n=9$. These numbers form the entry A006126 of~\cite{ListINtSequence}.
The numbers of $n$-variable inequivalent monotone Boolean functions are known up to $n=9$ (see~\cite{STEPHEN201415,pawelski2021number,pawelski2023number}) and form
the entry A003182 of~\cite{ListINtSequence}. 


The focus of the present work is on counting unate and monotone Boolean functions under various restrictions. 
For $n\leq 5$, it is possible to enumerate all $n$-variable Boolean functions. Consequently, the problem of counting various sub-classes of $n$-variable
Boolean functions becomes a reasonably simple problem. Non-triviality of counting Boolean functions arises for $n\geq 6$. 

We show that the problem of counting unate functions reduces to the problem of counting monotone functions. Since the numbers of $n$-variable monotone functions
are known up to $n=9$, these values immediately provide the numbers of $n$-variable unate functions up to $n=9$.  
The problem of counting balanced monotone functions has not been considered in the literature. We use an enumeration strategy to count the number of balanced 
monotone functions up to $n=7$. We show that the problem of counting balanced unate functions reduces to the problem of counting balanced monotone functions.
Consequently, we obtain the numbers of $n$-variable balanced unate functions up to $n=7$. 
We further extend these results to obtain the numbers of non-degenerate balanced monotone functions, non-degenerate unate functions, and non-degenerate balanced unate functions. 

We describe a simple filtering technique for counting the number of equivalence classes of $n$-variable functions possessing a given property. Using this technique,
we compute the number of equivalence classes of $n$-variable balanced monotone functions.
Unlike the situation for counting functions, the problem of counting the number of equivalence classes of unate functions does not reduce to the
problem of counting the number of equivalence classes of monotone functions. So to count equivalence classes of unate functions, we used a method to generate
all $n$-variable unate functions and applied our filtering technique to obtain the number of equivalence classes of $n$-variable unate functions. This allowed us to
obtain the numbers of equivalence classes of $n$-variable unate and balanced unate functions up to $n=6$. 
We further extend these results to obtain the numbers of equivalence classes of $n$-variable non-degenerate monotone functions up to $n=9$. Moreover, we obtain the numbers of equivalence classes of $n$-variable balanced monotone functions, non-degenerate balanced monotone functions, non-degenerate unate functions and non-degenerate balanced unate functions up to $n=6$. 

To summarise, the new results that we obtain for monotone and unate functions are the following.
\begin{description}
	\item{\em Monotone:} 
		\begin{enumerate}
			\item Numbers of $n$-variable balanced monotone functions and $n$-variable non-degenerate balanced monotone functions up to $n=7$.
			\item Numbers of equivalence classes of $n$-variable non-degenerate monotone functions up to $n=9$. 
			\item Numbers of equivalence classes of $n$-variable balanced monotone functions, and $n$-variable non-degenerate balanced monotone functions up to $n=6$.
		\end{enumerate}
	\item{\em Unate:}
		\begin{enumerate}
			\item Numbers of $n$-variable unate functions and $n$-variable non-degenerate unate functions up to $n=9$.
			\item Numbers of $n$-variable balanced unate functions and $n$-variable non-degenerate balanced unate functions up to $n=7$.
			\item Numbers of equivalence classes of $n$-variable unate functions, $n$-variable non-degenerate unate functions, $n$-variable balanced unate
				functions, and $n$-variable non-degenerate balanced unate functions up to $n=6$.
		\end{enumerate}
\end{description}

\paragraph{Related counts:}
The number of NPN-equivalence classes\footnote{Two Boolean functions are said to be NPN equivalent, if one can be obtained from the
other by some combination of the following operations: a permutation of the variables, negation of a subset of the variables, and negation of the output.
Two functions are NPN inequivalent if they are not NPN equivalent.}
of unate Boolean functions has been studied (see~\cite{baugh72} and $A003183$ in~\cite{ListINtSequence}). 
A proper subclass of unate functions is the class of unate cascade functions which have been studied in~\cite{sasao1979,maitra1962cascaded,mukhopadhyay1969unate}.
Entry $A005612$ in~\cite{ListINtSequence} provides counts of unate cascade functions. 

\paragraph{Outline of the paper:}
In Section~\ref{sec-math} we describe the preliminaries and prove the mathematical results required to obtain the various counts. In Section~\ref{sec-fn-cnt} we address
the problem of counting various sub-classes of monotone and unate functions and in Section~\ref{sec-enum} we take up the problem of counting equivalence classes of
monotone and unate functions possessing a combination of several basic properties. Finally, Section~\ref{sec-conclu} provides the concluding remarks.

\section{Mathematical Results \label{sec-math} }
We fix some terminology and notation. The cardinality of a finite set $S$ will be denoted as $\#S$. 
For $x,y\in \{0,1\}$, $xy$ and $x\oplus y$ denote the AND and XOR operations respectively, and $\overline{x}$ denotes the complement (or negation) of $x$. 

Elements of $\{0,1\}^n$, $n\geq 2$, are $n$-bit strings (or vectors) and will be denoted using bold font. Given $n\geq 2$ and $1\leq i\leq n$, by $\mathbf{e}_i$
we will denote the $n$-bit string whose $i$-th bit is 1 and is 0 elsewhere.

Let $f$ be an $n$-variable Boolean function. 
The weight $\sym{wt}(f)$ of $f$ is the size of its support, i.e. $\sym{wt}(f)=\#\{\mathbf{x}: f(\mathbf{x})=1\}$.
An $n$-variable Boolean function $f$ can be uniquely represented by a binary string of length $2^n$ in the following manner: for 
$0\leq i<2^n$, the $i$-th bit of the string is the value of $f$ on the $n$-bit binary representation of $i$. We will use the same notation $f$ to 
denote the string representation of $f$. So $f_0\cdots f_{2^n-1}$ is the bit string of length $2^n$ which represents $f$.

By $\overline{f}$, we will denote the negation of $f$, i.e. $\overline{f}(\mathbf{x})=1$ if and only if $f(\mathbf{x})=0$. Let
$f^r$ be a Boolean function defined as $f^r(x_1,\ldots,x_n)=f(\overline{x}_1,\ldots,\overline{x}_n)$. The bit string representation of $f^r$ is the
reverse of the bit string representation of $f$. 

Let $g$ and $h$ be two $n$-variable Boolean functions having string representations $g_0\cdots g_{2^n-1}$ and $h_0\cdots h_{2^n-1}$. 
We write $g\leq h$ if $g_i\leq h_i$ for $i=0,\ldots,2^n-1$. From $g$ and $h$, it is possible to construct an $(n+1)$-variable function $f$ whose
string representation is obtained by concatenating the string representations of $g$ and $h$. We denote this construction as $f=g||h$. For
$(x_1,\ldots,x_{n+1})\in\{0,1\}^{n+1}$, we have
\begin{eqnarray}\label{eqn-concat}
	f(x_1,\ldots,x_{n+1}) & = & \overline{x}_1g(x_2,\ldots,x_{n+1}) \oplus x_1h(x_2,\ldots,x_{n+1}).
\end{eqnarray}

An $n$-variable Boolean function $f$ is said to be non-degenerate on the $i$-th variable, $1\leq i\leq n$, if there is an $\bm{\alpha}\in\{0,1\}^n$ such that
$f(\bm{\alpha})\neq f(\bm{\alpha}\oplus \mathbf{e}_i)$. The function $f$ is said to be non-degenerate, if it is non-degenerate on all the $n$ variables.

By a property $\mathcal{P}$ of Boolean functions, we will mean a subset of the set of all Boolean functions. For example, $\mathcal{P}$
could be the property of being balanced, being monotone, being unate, or a combination of these properties, where a combination of properties
is given by the intersection of the corresponding subsets of Boolean functions. For $n\geq 0$, let $\sym{P}_n$ denote the number of $n$-variable Boolean functions
possessing the property $\mathcal{P}$, and let $\sym{nd}\mbox{-}\sym{P}_n$ denote the number of $n$-variable non-degenerate Boolean functions possessing the property
$\mathcal{P}$. Since an $n$-variable function can be non-degenerate on $i$ variables for some $i\in \{0,\ldots,n\}$ and the $i$ variables can be chosen from the
$n$ variables in ${n\choose i}$ ways, we obtain the following result which shows that the sequence $\{\sym{P}_n\}_{n\geq 0}$ is given by the binomial transform of 
the sequence $\{\sym{nd}\mbox{-}\sym{P}_n\}_{n\geq 0}$.
\begin{proposition}\label{prop-nd}
	For any property $\mathcal{P}$ of Boolean functions,
	\begin{eqnarray}\label{eqn-nd}
		\sym{P}_n & = & \sum_{i=0}^n {n\choose i} \sym{nd}\mbox{-}\sym{P}_i.
	\end{eqnarray}
	Consequently,
	\begin{eqnarray}\label{eqn-nd-inv}
		\sym{nd}\mbox{-}\sym{P}_n & = & \sum_{i=0}^n (-1)^{n-i}{n\choose i} \sym{P}_i.
        \end{eqnarray}
\end{proposition}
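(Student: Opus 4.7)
The plan is to prove \eqref{eqn-nd} by a direct counting argument that partitions $n$-variable functions in $\mathcal{P}$ according to the set of variables on which they are non-degenerate, and then to derive \eqref{eqn-nd-inv} from \eqref{eqn-nd} by standard binomial inversion.

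For the first identity, I would associate to each $n$-variable Boolean function $f$ the subset $S(f)\subseteq\{1,\ldots,n\}$ of those indices $i$ on which $f$ is non-degenerate, in the sense of the definition given just above the proposition. If $|S(f)|=i$, then $f$ truly depends only on the variables indexed by $S(f)$, so it is canonically identified with a non-degenerate $i$-variable function $\tilde{f}$. Conversely, given any subset $S\subseteq\{1,\ldots,n\}$ of size $i$ and any non-degenerate $i$-variable function $g$, there is a unique $n$-variable function $f$ satisfying $S(f)=S$ whose reduction to its essential variables equals $g$, the remaining $n-i$ variables being dummies. This sets up a bijection between $n$-variable Boolean functions and pairs $(S,g)$ where $g$ is a non-degenerate $|S|$-variable function.

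To apply this bijection to functions with property $\mathcal{P}$, I would use the natural convention — valid for every property considered in the paper (balanced, monotone, unate, and combinations thereof) — that $f\in\mathcal{P}$ if and only if $\tilde{f}\in\mathcal{P}$, so that the property is insensitive to the presence of dummy variables. Under this convention, the bijection restricts to pairs $(S,g)$ with $g\in\mathcal{P}$; grouping by $|S|=i$, the number of choices for $S$ is $\binom{n}{i}$, and for each such $S$ there are $\sym{nd}\mbox{-}\sym{P}_i$ admissible $g$. Summing over $i\in\{0,\ldots,n\}$ yields \eqref{eqn-nd}.

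For \eqref{eqn-nd-inv}, I would substitute the expression from \eqref{eqn-nd} into its right hand side, interchange the order of summation, and apply the standard identity $\sum_{i=j}^{n}(-1)^{n-i}\binom{n}{i}\binom{i}{j}=[n=j]$, which cleanly collapses the double sum to $\sym{nd}\mbox{-}\sym{P}_n$. None of the steps is technically hard; the only point requiring real care is the implicit compatibility of $\mathcal{P}$ with dropping and restoring irrelevant variables, on which the entire reduction hinges, and which I would flag explicitly for the properties used later in the paper.
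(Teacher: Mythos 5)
Your proof is correct and follows essentially the same route as the paper, which justifies \eqref{eqn-nd} by the same one-line counting argument (choose the $i$ essential variables in $\binom{n}{i}$ ways, then a non-degenerate function on them) and obtains \eqref{eqn-nd-inv} by standard binomial inversion. Your explicit observation that the identity implicitly requires $\mathcal{P}$ to be compatible with adding and removing dummy variables (and with permuting variables) is a genuine hypothesis that the paper's blanket phrase ``for any property $\mathcal{P}$'' glosses over, and you are right that it holds for all the properties actually used later.
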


\begin{remark}\label{rem-n=0}
	We assume that for $n=0$, there are two $n$-variable, non-degenerate, monotone (and hence unate), and unbalanced Boolean functions whose string representations are 
	0 and 1. 
\end{remark}

For $n\geq 0$, let $\sym{A}_n=2^{2^n}$ be the number of all $n$-variable Boolean functions, and let $\sym{B}_n={2^n\choose 2^{n-1}}$ be the number
of $n$-variable balanced Boolean functions. Let $\sym{nd}\mbox{-}\sym{A}_n$ be the number of all non-degenerate $n$-variable Boolean functions, and $\sym{nd}\mbox{-}\sym{B}_n$ 
be the number of all non-degenerate $n$-variable balanced Boolean functions. Using Proposition~\ref{prop-nd}, we obtain
\begin{eqnarray}\label{eqn-all-bal}
	\sym{nd}\mbox{-}\sym{A}_n = \sum_{i=0}^n (-1)^{n-i}{n\choose i} \cdot 2^{2^i} & \mbox{and} & 
	\sym{nd}\mbox{-}\sym{B}_n = \sum_{i=0}^n (-1)^{n-i}{n\choose i} \cdot {2^i\choose 2^{i-1}}.
\end{eqnarray}

For $n\geq 0$, by
$\sym{M}_n, \sym{BM}_n, \sym{U}_n$ and $\sym{BU}_n$, we will denote the numbers of $n$-variable monotone, balanced-monotone, unate, and balanced-unate functions
respectively, and by $\sym{nd}\mbox{-}\sym{M}_n, \sym{nd}\mbox{-}\sym{BM}_n, \sym{nd}\mbox{-}\sym{U}_n$ and $\sym{nd}\mbox{-}\sym{BU}_n$ we will denote the corresponding numbers
of non-degenerate functions. The relations between the number of $n$-variable functions possessing one of these properties and the number of non-degenerate
$n$-variable functions possessing the corresponding property are obtained from Proposition~\ref{prop-nd}. 

The following result relates the numbers of monotone and unate Boolean functions.
\begin{proposition}\label{prop-m-u}
For $n\geq 0$, the following holds.
	\begin{eqnarray}
		\sym{nd}\mbox{-}\sym{U}_n & = & 2^n \cdot \sym{nd}\mbox{-}\sym{M}_n, \label{eqn-m-u-nd} \\
		\sym{nd}\mbox{-}\sym{BU}_n & = & 2^n \cdot \sym{nd}\mbox{-}\sym{BM}_n, \label{eqn-m-u-b-nd} \\
		\sym{U}_n & \leq  & 2^n \cdot \sym{M}_n, \label{eqn-m-u} \\
		\sym{BU}_n & \leq & 2^n \cdot \sym{BM}_n,.\label{eqn-m-u-b}
	\end{eqnarray}
\end{proposition}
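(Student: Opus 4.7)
The plan is to realize the passage from a monotone function to a unate function via the natural action of $\{0,1\}^n$ on the domain: for each $S\subseteq\{1,\ldots,n\}$ and each $n$-variable function $g$, define $\phi_S(g)(x_1,\ldots,x_n)=g(y_1,\ldots,y_n)$, where $y_i=\overline{x}_i$ if $i\in S$ and $y_i=x_i$ otherwise. First I would record the easy structural facts about $\phi_S$: it is an involution (for each fixed $S$), it preserves the weight of $g$ (hence preserves balancedness), and it preserves non-degeneracy on every variable, since flipping a coordinate just permutes $\{0,1\}^n$. I would also observe that if $g$ is monotone, then $\phi_S(g)$ is monotone increasing in variable $i$ for $i\notin S$ and monotone decreasing in variable $i$ for $i\in S$; hence $\phi_S(g)$ is unate.

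The second step is surjectivity: every unate $f$ equals $\phi_S(g)$ for some monotone $g$ and some $S$. Given a unate $f$, for each coordinate $i$ choose some direction in which $f$ is monotone (which exists by definition of unate), and let $S$ be the indices chosen to be decreasing; then $g\defeq\phi_S(f)$ is monotone increasing in every variable, i.e.\ monotone, and $\phi_S(g)=f$ because $\phi_S$ is an involution. This directly yields the counting surjections $(g,S)\mapsto\phi_S(g)$ from monotone-times-subsets onto unate, and from balanced-monotone-times-subsets onto balanced-unate (using weight preservation), which gives $\sym{U}_n\leq 2^n\cdot\sym{M}_n$ and $\sym{BU}_n\leq 2^n\cdot\sym{BM}_n$.

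For the equalities in the non-degenerate cases, the main point is to show that the above map is a bijection between pairs $(g,S)$ with $g$ a non-degenerate monotone (resp.\ non-degenerate balanced-monotone) function and non-degenerate unate (resp.\ non-degenerate balanced-unate) functions $f$. Since $\phi_S$ preserves non-degeneracy, if $g$ is non-degenerate then so is $\phi_S(g)$, and if $f$ is non-degenerate then any $g=\phi_S(f)$ is also non-degenerate. The crux is uniqueness of $S$ given a non-degenerate unate $f$: if $f$ were monotone in variable $i$ in both directions, then for every setting of the other variables the two values of $f$ obtained by changing $x_i$ would both satisfy $\leq$ and $\geq$, forcing them to be equal, contradicting non-degeneracy on the $i$-th variable. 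Hence the direction chosen in each coordinate is forced, $S$ is unique, and the map $(g,S)\mapsto\phi_S(g)$ is a $2^n$-to-$1$ correspondence of $(g,S)$ to $f$, yielding \eqref{eqn-m-u-nd} and \eqref{eqn-m-u-b-nd}.

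The main obstacle, and the step I would write out most carefully, is this uniqueness argument, together with checking that the argument is compatible with non-degeneracy (i.e.\ both $g$ and $f$ are non-degenerate simultaneously) and with balancedness (which is automatic since $\phi_S$ is a bijection on $\{0,1\}^n$). The inequalities for the possibly degenerate cases are then immediate from the same construction, with equality failing because for each variable on which $f$ is degenerate, both choices of direction are valid, giving multiple preimages $(g,S)$ for the same $f$.
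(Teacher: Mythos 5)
Your proposal is correct and follows essentially the same route as the paper: both realize unate functions as input translations $f(\mathbf{x}\oplus\bm{\alpha})$ of monotone functions (your $\phi_S$ is exactly the paper's $f_{\bm{\alpha}}$) and both use non-degeneracy to force the direction of monotonicity in each variable, hence the uniqueness of the translation. The only differences are cosmetic: you obtain the inequalities directly from the surjection rather than via the binomial transform of the non-degenerate equalities as the paper does, and your phrase ``$2^n$-to-$1$ correspondence of $(g,S)$ to $f$'' should read ``one-to-one'' --- your own argument shows each non-degenerate unate $f$ has exactly one preimage pair, which is precisely what the count requires.
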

\begin{proof}
	First we consider~\eqref{eqn-m-u-nd} and~\eqref{eqn-m-u-b-nd}. We prove~\eqref{eqn-m-u-nd}, the proof of~\eqref{eqn-m-u-b-nd} being similar. 

	Let $f$ be an $n$-variable monotone Boolean function. Then it is easy to see that for any $\bm{\alpha}\in\{0,1\}^n$, the $n$-variable function
	$f_{\bm{\alpha}}$ is unate, where $f_{\bm{\alpha}}$ is defined as $f_{\bm{\alpha}}(\mathbf{x})=f(\mathbf{x}\oplus \bm{\alpha})$ for all $\mathbf{x}\in \{0,1\}^n$. 
	The proof of~\eqref{eqn-m-u-nd} follows from the following claim.

	\vspace{5pt}
	
	\noindent{\em Claim:} If $f$ is monotone, then the $2^n$ possible functions $f_{\bm{\alpha}}$ corresponding to the $2^n$ possible $\bm{\alpha}$'s are 
	distinct if and only if $f$ is non-degenerate. 
	
	\vspace{5pt}
	
	\noindent{\em Proof of the claim:} 
	Suppose $f$ is degenerate on the $i$-th variable. Then $f$ and $f_{\mathbf{e}_i}$ are equal. This proves one side of the claim. 
	So suppose that $f$ is non-degenerate. We have to show that for $\bm{\alpha}\neq \bm{\beta}$, $f_{\bm{\alpha}}$ and $f_{\bm{\beta}}$ are distinct functions. 
	Let if possible $f_{\bm{\alpha}}$ and $f_{\bm{\beta}}$ be equal.
	Note that since $f$ is non-degenerate, both $f_{\bm{\alpha}}$ and $f_{\bm{\beta}}$ are also non-degenerate. 
	Since $\bm{\alpha}=(\alpha_1,\ldots,\alpha_n)$ and $\bm{\beta}=(\beta_1,\ldots,\beta_n)$ are distinct, there is a $j$ in $\{1,\ldots,n\}$ such that $\alpha_j\neq \beta_j$. 
	Suppose without loss of generality that $\alpha_j=0$ and $\beta_j=1$. Since $f$ is monotone, it is monotone increasing in all variables and hence in the $j$-th
	variable. Further, since $\alpha_j=0$, $f_{\bm{\alpha}}$ is monotone increasing in the $j$-th variable and since $\beta_j=1$, $f_{\bm{\beta}}$ is monotone
	decreasing in the $j$-th variable. From $f_{\bm{\alpha}}$ is monotone increasing in the $j$-th variable we have that for all 
	$\mathbf{y}=(y_1,\ldots,y_n)\in \{0,1\}^n$ with $y_j=0$, $f_{\bm{\alpha}}(\mathbf{y})\leq f_{\bm{\alpha}}(\mathbf{y}\oplus \mathbf{e}_j)$. Further, since
	$f_{\bm{\alpha}}$ is non-degenerate, and hence non-degenerate on the $j$-th variable, equality cannot hold everywhere, i.e. there is a 
	$\mathbf{z}=(z_1,\ldots,z_n)\in \{0,1\}^n$ with $z_j=0$, such that 
	$f_{\bm{\alpha}}(\mathbf{z})=0$ and $f_{\bm{\alpha}}(\mathbf{z}\oplus \mathbf{e}_j)=1$. Since $f_{\bm{\alpha}}$ and $f_{\bm{\beta}}$ are assumed to be equal,
	it follows that $f_{\bm{\beta}}(\mathbf{z})=0$ and $f_{\bm{\beta}}(\mathbf{z}\oplus \mathbf{e}_j)=1$, which contradicts the fact that 
	$f_{\bm{\beta}}$ is monotone decreasing in the $j$-th variable. This proves the claim.

	Next we consider~\eqref{eqn-m-u} and~\eqref{eqn-m-u-b}. We provide the proof of~\eqref{eqn-m-u}, the proof of~\eqref{eqn-m-u-b} being similar.
The relation given by~\eqref{eqn-m-u} can be obtained from~\eqref{eqn-m-u-nd} and Proposition~\ref{prop-nd} using the following calculation.
\begin{eqnarray*}
	\sym{U}_n & = & \sum_{i=0}^{n} {n\choose i} \sym{nd}\mbox{-}\sym{U}_i 
	= \sum_{i=0}^{n} \left({n\choose i} \cdot 2^i \cdot \sym{nd}\mbox{-}\sym{M}_i\right) \leq 2^n \cdot \sum_{i=0}^{n} {n\choose i} \sym{nd}\mbox{-}\sym{M}_i 
	= 2^n\cdot \sym{M}_n.
\end{eqnarray*}
\end{proof}

We record two known facts about monotone functions. 
\begin{proposition}\label{fact_mono}$\cite{mono_gen}$
Let $g$ and $h$ be $n$-variable Boolean functions and $f=g||h$. Then $f$ is a monotone function if and only if $g$ and $h$ are both monotone functions and $g\leq h$.   
\end{proposition}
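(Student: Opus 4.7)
The plan is to unpack the definition $f = g\|h$ and check monotonicity variable by variable. From equation~\eqref{eqn-concat}, substituting $x_1=0$ gives $f(0,x_2,\ldots,x_{n+1}) = g(x_2,\ldots,x_{n+1})$, while $x_1 = 1$ gives $f(1,x_2,\ldots,x_{n+1}) = h(x_2,\ldots,x_{n+1})$. So the two ``halves'' of $f$ are literally $g$ and $h$, and everything reduces to reading off what monotonicity in each of the $n+1$ variables says about those halves.

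For the forward direction, suppose $f$ is monotone. Monotonicity in the first variable says, for every $(x_2,\ldots,x_{n+1}) \in \{0,1\}^n$, that $f(0,x_2,\ldots,x_{n+1}) \leq f(1,x_2,\ldots,x_{n+1})$, which by the above observation is exactly $g(x_2,\ldots,x_{n+1}) \leq h(x_2,\ldots,x_{n+1})$; hence $g \leq h$. Monotonicity in variable $i \geq 2$, applied with $x_1$ fixed to $0$, yields monotonicity of $g$ in its $(i-1)$-th variable, and applied with $x_1$ fixed to $1$ yields monotonicity of $h$ in its $(i-1)$-th variable. Ranging over all $i \geq 2$ shows $g$ and $h$ are both monotone.

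For the reverse direction, suppose $g$ and $h$ are monotone and $g \leq h$. To check $f$ is monotone in variable $1$, fix $(x_2,\ldots,x_{n+1})$; the relevant inequality is $g(x_2,\ldots,x_{n+1}) \leq h(x_2,\ldots,x_{n+1})$, which holds by hypothesis. To check monotonicity in variable $i \geq 2$, fix all other coordinates including $x_1$ and split on the value of $x_1$: if $x_1 = 0$ the inequality reduces to monotonicity of $g$ in its $(i-1)$-th variable, and if $x_1 = 1$ it reduces to monotonicity of $h$ in the same variable. Both hold by assumption.

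There is no real obstacle here; the statement is essentially a restatement of the definitions once one notes that the two halves of the string-representation of $f$ are $g$ and $h$. The only thing worth being careful about is index bookkeeping, since the $i$-th variable of $f$ corresponds to the $(i-1)$-th variable of $g$ and of $h$ for $i \geq 2$, while $i=1$ governs the ``between-halves'' comparison that gives the $g \leq h$ condition.
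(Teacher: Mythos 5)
Your proof is correct. The paper states this proposition as a known fact with a citation to \cite{mono_gen} and gives no proof of its own; your argument --- identifying the two halves of $f$ with $g$ and $h$ via~\eqref{eqn-concat} and then reading off monotonicity in $x_1$ as the condition $g\leq h$ and monotonicity in $x_i$, $i\geq 2$, as monotonicity of $g$ and $h$ --- is exactly the standard direct verification one would expect.
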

\begin{proposition}\label{fact_2}$\left(\mbox{A003183 of~\cite{ListINtSequence}}\right)$ 
	If $f$ is a monotone function then $\overline{f^{r}}$ is also a monotone function.
\end{proposition}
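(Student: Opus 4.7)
The plan is to verify monotonicity of $\overline{f^{r}}$ directly from its definition: I will pick two arbitrary vectors $\mathbf{x},\mathbf{y}\in\{0,1\}^n$ with $\mathbf{x}\leq \mathbf{y}$ in the coordinate-wise partial order and show that $\overline{f^{r}}(\mathbf{x})\leq \overline{f^{r}}(\mathbf{y})$. Once this is established for all such pairs, a routine restriction to pairs differing only in a single coordinate yields monotonicity in each of the $n$ variables individually, which is the definition used in the paper.

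The verification factors through three elementary order-reversing steps. First, the componentwise complementation map $\mathbf{x}\mapsto \overline{\mathbf{x}}$ is an order-reversing involution on $\{0,1\}^n$, so $\mathbf{x}\leq \mathbf{y}$ implies $\overline{\mathbf{x}}\geq \overline{\mathbf{y}}$. Second, because $f$ is monotone (increasing in every variable, hence monotone on $\{0,1\}^n$ as a whole), $f(\overline{\mathbf{x}})\geq f(\overline{\mathbf{y}})$; by the definition $f^{r}(\mathbf{x})=f(\overline{\mathbf{x}})$ this reads $f^{r}(\mathbf{x})\geq f^{r}(\mathbf{y})$. Third, negation on $\{0,1\}$ is order-reversing, so applying $\overline{\,\cdot\,}$ flips the inequality once more to give $\overline{f^{r}(\mathbf{x})}\leq \overline{f^{r}(\mathbf{y})}$, which is precisely $\overline{f^{r}}(\mathbf{x})\leq \overline{f^{r}}(\mathbf{y})$.

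There is no substantive obstacle here: the proof is simply the observation that $\overline{f^{r}}$ is the De Morgan dual of $f$ inside the free distributive lattice, and the two order-reversing operations (input complementation and output complementation) cancel. If a more concrete write-up is preferred, I would instead fix an index $i\in\{1,\ldots,n\}$ and arbitrary $x_1,\ldots,x_{i-1},x_{i+1},\ldots,x_n\in\{0,1\}$, then run the same three-step chain on the pair obtained by setting the $i$-th coordinate to $0$ versus $1$; this is only notationally heavier and exposes the same reasoning.
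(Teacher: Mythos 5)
Your proposal is correct. The paper states this proposition as a known fact with only a citation and gives no proof of its own, so there is nothing to compare against; your three-step order-reversal argument (input complementation reverses the componentwise order, $f$ preserves it, output negation reverses it again) is the standard proof and is complete, including the routine bridge between the paper's coordinatewise definition of monotonicity and the global partial-order formulation you verify.
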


Next we present some results on unate and monotone functions which will be useful in our enumeration strategy. 
The first result is the analogue of Proposition~\ref{fact_mono} for unate functions.
\begin{proposition}\label{unate_proposition}
Let $g$ and $h$ be $n$-variable functions and $f=g||h$. Then $f$ is a unate function if and only if $g$ and $h$ are both unate functions satisfying the following
	two conditions.
	\begin{enumerate}
		\item For each variable, $g$ and $h$ are either both monotone increasing, or both monotone decreasing. 
		\item Either $g\leq h$ or $h\leq g$.      
	\end{enumerate}
\end{proposition}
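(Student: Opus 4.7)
The plan is to analyse the unateness of $f$ one variable at a time, using the defining identity
\begin{eqnarray*}
	f(x_1,\ldots,x_{n+1}) & = & \overline{x}_1 g(x_2,\ldots,x_{n+1}) \oplus x_1 h(x_2,\ldots,x_{n+1})
\end{eqnarray*}
from~\eqref{eqn-concat}. Unateness of $f$ is the conjunction, over $i\in\{1,\ldots,n+1\}$, of the statement ``$f$ is either monotone increasing or monotone decreasing in the $i$-th variable''. I will show that the condition at $i=1$ is exactly condition~2 of the proposition, and that the condition at $i=j+1$ for $j\in\{1,\ldots,n\}$ is exactly the statement that $g$ and $h$ are both unate at variable $j$ and agree in the direction of their unateness there; taking the conjunction over all $i\geq 2$ then yields condition~1 together with unateness of $g$ and of $h$.

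For the first step, observe that with $x_1=0$ the function $f$ reduces to $g$ and with $x_1=1$ it reduces to $h$. Hence $f$ is monotone increasing in $x_1$ precisely when $g\leq h$, and monotone decreasing in $x_1$ precisely when $h\leq g$. Consequently, $f$ is unate in $x_1$ if and only if condition~2 holds. For the second step, fix $j\in\{1,\ldots,n\}$ and a string $(x_2,\ldots,x_{n+1})\in\{0,1\}^n$ with $x_{j+1}=0$. Evaluating $f$ at $x_1=0$ at the two inputs differing in coordinate $j+1$ gives $g(x_2,\ldots,x_{n+1})$ and $g(x_2,\ldots,x_{n+1}\oplus \mathbf{e}_j)$, while evaluating at $x_1=1$ gives the corresponding values of $h$. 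Therefore $f$ is monotone increasing in $x_{j+1}$ if and only if both $g$ and $h$ are monotone increasing in the $j$-th variable, and analogously for monotone decreasing. Unateness of $f$ in $x_{j+1}$ is thus equivalent to the assertion that $g$ and $h$ are each unate in the $j$-th variable and are either both monotone increasing or both monotone decreasing in that variable.

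Combining the two steps: $f$ is unate if and only if condition~2 holds and, for every $j\in\{1,\ldots,n\}$, $g$ and $h$ are both unate in their $j$-th variable with matching direction. Quantifying over all $j$, the latter is precisely the statement that $g$ and $h$ are unate functions for which condition~1 is satisfied. This gives the required equivalence.

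The only mildly delicate point is how to interpret condition~1 when $g$ or $h$ is degenerate in some variable $j$: such a function is simultaneously monotone increasing and monotone decreasing in variable $j$, so it can be paired in either direction with the other function without issue, and the argument above still goes through. Beyond this bookkeeping, the proof is a direct variable-by-variable unpacking of the concatenation formula, so I do not anticipate a serious obstacle.
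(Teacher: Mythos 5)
Your proof is correct and follows essentially the same route as the paper: a variable-by-variable analysis of $f=g||h$ via the concatenation formula, treating $x_1$ (which yields condition~2) separately from the remaining variables (which yield condition~1 and unateness of $g$ and $h$). The only cosmetic difference is that you organize the argument as a chain of per-variable equivalences rather than separate ``if'' and ``only if'' passes, and you explicitly flag the degenerate-variable case, which the paper leaves implicit.
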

\begin{proof}
	First consider the proof of the ``if'' part. Suppose $g$ and $h$ are unate functions satisfying the stated condition. We have to show that for each variable,
	$f$ is either monotone increasing, or monotone decreasing. Consider the variable $x_1$. If $g\leq h$, then from~\eqref{eqn-concat},
	$f$ is monotone increasing on $x_1$, while if $g\geq h$, then again from~\eqref{eqn-concat}, $f$ is monotone decreasing on $x_1$. Now consider
	any variable $x_i$, with $i\geq 2$. If $g$ and $h$ are both monotone increasing on $x_i$, then $f$ is also monotone increasing on $x_i$, while if $g$ and $h$ are 
	both monotone decreasing on $x_i$, then $f$ is also monotone decreasing on $x_i$. Since for each variable, $f$ is either monotone increasing, or
	monotone decreasing, it follows that $f$ is a unate function.

	For the converse, suppose that $f$ is a unate functions. Then for each variable $x_i$, $i\geq 1$, $f$ is either monotone increasing or monotone decreasing. 
	From~\eqref{eqn-concat}, it
	follows that for each variable $x_i$, $i\geq 2$, $g$ and $h$ are either both monotone increasing, or both monotone decreasing. So in particular, $g$ and $h$ are
	unate. If $f$ is monotone increasing for $x_1$, then $g\leq h$ and if $f$ is monotone decreasing for $x_1$, then $g\geq h$. 
\end{proof}

\begin{proposition}\label{prop2} 
	If $f$ is a unate function then $\overline{f}$ is also a unate function. 
\end{proposition}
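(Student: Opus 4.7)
The plan is to argue variable-by-variable using the definitions given at the start of Section~1. Fix a unate function $f$ and a variable index $i\in\{1,\ldots,n\}$; by definition $f$ is either monotone increasing or monotone decreasing in $x_i$. I will show that complementation flips this direction but preserves monotonicity in each variable separately, and then invoke the definition of unate to conclude.

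More concretely, first I would observe that for Boolean values $a,b\in\{0,1\}$, the inequality $a\leq b$ is equivalent to $\overline{a}\geq \overline{b}$. Applying this pointwise to the defining inequality of monotonicity in the $i$-th variable, the condition
\[
f(x_1,\ldots,x_{i-1},0,x_{i+1},\ldots,x_n)\leq f(x_1,\ldots,x_{i-1},1,x_{i+1},\ldots,x_n)
\]
for all choices of the other coordinates transforms into the condition
\[
\overline{f}(x_1,\ldots,x_{i-1},0,x_{i+1},\ldots,x_n)\geq \overline{f}(x_1,\ldots,x_{i-1},1,x_{i+1},\ldots,x_n),
\]
so $f$ monotone increasing in $x_i$ implies $\overline{f}$ monotone decreasing in $x_i$. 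The reverse direction is symmetric.

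Having done this for each $i$, since $f$ is assumed unate, for every $i$ it is monotone in $x_i$ in one of the two directions, and by the above observation $\overline{f}$ is monotone in $x_i$ in the opposite direction. Thus $\overline{f}$ is, for each variable, either monotone increasing or monotone decreasing, which is exactly the definition of being unate. I do not anticipate any real obstacle here; the only thing to be careful about is not conflating ``unate'' with ``monotone,'' i.e.\ the direction of monotonicity may differ across variables, but the argument above is applied to each variable independently, so this causes no issue.
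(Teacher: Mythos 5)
Your proof is correct, and it takes a genuinely different (and more direct) route than the paper. The paper proves this proposition by induction on the number of variables: it writes an $(n+1)$-variable unate $f$ as a concatenation $f=g\|h$ of two $n$-variable unate functions via Proposition~\ref{unate_proposition}, observes $\overline{f}=\overline{g}\|\overline{h}$, and checks that the concatenation conditions are preserved under complementation, invoking the induction hypothesis for $\overline{g}$ and $\overline{h}$. Your argument bypasses all of that: complementing the output reverses the pointwise inequality defining monotonicity in each variable, so increasing becomes decreasing and vice versa, and unateness (being monotone in some direction in every variable) is preserved. This is shorter, requires no induction, and does not depend on the concatenation machinery of Proposition~\ref{unate_proposition}; the paper's inductive phrasing is presumably chosen because the surrounding enumeration algorithms are all organized around the $g\|h$ decomposition, but nothing in the statement itself requires it. One small bonus of your argument: it makes explicit that $\sym{sig}(\overline{f})$ is the bitwise complement of $\sym{sig}(f)$ for non-constant $f$, which the inductive proof leaves implicit.
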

\begin{proof}
	The proof is by induction on the number of variables $n$. The base case is $n=1$ and is trivial. Suppose the result holds for some $n\geq 1$. 
	Suppose that $f$ is an $(n+1)$-variable unate function. Then $f$ can be written as $f=g||h$, where $g$ and $h$ are $n$-variable unate functions satisfying the conditions
	in Proposition~\ref{unate_proposition}. Then $\overline{f}=\overline{g}||\overline{h}$. By induction hypothesis, $\overline{g}$ and $\overline{h}$
	are $n$-variable unate functions and the conditions in Proposition~\ref{unate_proposition} hold for $\overline{g}$ and $\overline{h}$. So $\overline{f}$ is
	a unate function.
\end{proof}

For $0\leq w\leq 2^n$, let $\sym{M}_{n,w}$ (resp. $\sym{U}_{n,w}$) be the number of $n$-variable monotone (resp. unate) Boolean functions of weight $w$.

\begin{proposition}\label{prop4}
	For any $n\geq 1$ and weight $w\in \{0,\ldots,2^n\}$,  $\sym{M}_{n,w}=\sym{M}_{n,2^n-w}$. 
\end{proposition}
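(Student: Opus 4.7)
The plan is to exhibit a weight-complementing bijection on the set of $n$-variable monotone Boolean functions. Specifically, I will use the map $\phi$ defined by $\phi(f) = \overline{f^r}$, and show that $\phi$ restricts to a bijection from monotone functions of weight $w$ onto monotone functions of weight $2^n-w$.

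First I would observe that Proposition~\ref{fact_2} already guarantees that $\phi$ sends monotone functions to monotone functions. Next I would verify the weight calculation: the operation $f \mapsto f^r$ merely reverses the string representation, so $\sym{wt}(f^r) = \sym{wt}(f) = w$, and then complementation gives $\sym{wt}(\overline{f^r}) = 2^n - w$. Hence $\phi$ indeed maps the set of $n$-variable monotone functions of weight $w$ into the set of $n$-variable monotone functions of weight $2^n - w$.

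To conclude that $\phi$ is a bijection between these two sets, I would check that $\phi$ is an involution. Since complementation acts only on output values while reversal acts only on the ordering of inputs, the two operations commute, i.e. $(\overline{g})^r = \overline{g^r}$ for every Boolean function $g$. Therefore
\begin{eqnarray*}
\phi(\phi(f)) & = & \overline{(\overline{f^r})^r} \;=\; \overline{\overline{(f^r)^r}} \;=\; (f^r)^r \;=\; f,
\end{eqnarray*}
so $\phi \circ \phi$ is the identity. Consequently $\phi$ is a bijection, and the two sets have the same cardinality, which is exactly the claim $\sym{M}_{n,w} = \sym{M}_{n,2^n - w}$.

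There is no real obstacle here; the only subtlety is justifying that complementation commutes with the reversal operation, and that the map $f \mapsto f^r$ is weight-preserving. Both facts are immediate from the definitions of $f^r$ and $\overline{f}$ via their string representations, so the proof is essentially a one-line appeal to Proposition~\ref{fact_2} together with the involution argument above.
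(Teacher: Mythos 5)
Your proposal is correct and follows essentially the same route as the paper, which likewise invokes the map $f\mapsto \overline{f^r}$ from Proposition~\ref{fact_2} to set up a one-one correspondence between monotone functions of weight $w$ and those of weight $2^n-w$; you merely spell out the weight computation and the involution check that the paper leaves implicit.
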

\begin{proof}
	Proposition~\ref{fact_2} sets up a one-one correspondence between $n$-variable monotone functions having weight $w$ and $n$-variable monotone functions having weight $2^n-w$. 
	This shows that $\sym{M}_{n,w}=\sym{M}_{n,2^n-w}$.
\end{proof}

\begin{proposition}\label{prop3}
	For any $n\geq 1$ and weight $w\in \{0,\ldots,2^n\}$, $\sym{U}_{n,w}=\sym{U}_{n,2^n-w}$. 
\end{proposition}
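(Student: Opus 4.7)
The plan is to mirror the proof of Proposition~\ref{prop4} but replace the monotone-preserving map $f\mapsto \overline{f^r}$ with a simpler one that works for unate functions, namely straight complementation. The key observation is that for unate functions we already have Proposition~\ref{prop2} at our disposal, which tells us that $\overline{f}$ is unate whenever $f$ is. This is exactly the closure property we need to promote complementation into a bijection on the set of $n$-variable unate functions.

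Concretely, I would define the map $\Phi\colon f\mapsto \overline{f}$ on the set of $n$-variable Boolean functions. Two routine facts do all the work: first, $\Phi$ is an involution since $\overline{\overline{f}}=f$; second, $\Phi$ sends a function of weight $w$ to a function of weight $2^n-w$, because for every $\mathbf{x}\in\{0,1\}^n$ we have $\overline{f}(\mathbf{x})=1\Leftrightarrow f(\mathbf{x})=0$. By Proposition~\ref{prop2}, if $f$ is unate then so is $\Phi(f)$, so $\Phi$ restricts to a self-map on the set of $n$-variable unate functions; combined with the involution property, this restriction is a bijection.

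Restricting further to a fixed weight $w$, the image of an $n$-variable unate function of weight $w$ under $\Phi$ is an $n$-variable unate function of weight $2^n-w$, and conversely. This produces a bijection between the set of $n$-variable unate functions of weight $w$ and the set of $n$-variable unate functions of weight $2^n-w$, which yields $\sym{U}_{n,w}=\sym{U}_{n,2^n-w}$.

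There is no real obstacle here; the only thing one has to be careful about is to invoke Proposition~\ref{prop2} rather than Proposition~\ref{fact_2}, since the monotone-preserving map used for Proposition~\ref{prop4} relied on the reversal $f^r$ to compensate for the fact that the plain complement of a monotone function is not monotone but anti-monotone. For unate functions this extra step is unnecessary, because the class of unate functions is already closed under complementation.
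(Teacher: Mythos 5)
Your proposal is correct and is essentially the paper's own argument: the paper likewise invokes Proposition~\ref{prop2} to turn complementation $f\mapsto\overline{f}$ into a weight-swapping bijection on the set of $n$-variable unate functions. You merely spell out the involution and weight-complement details that the paper leaves implicit.
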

\begin{proof}
	Proposition~\ref{prop2} sets up a one-one correspondence between $n$-variable unate functions having weight $w$ and $n$-variable unate functions having weight $2^n-w$. 
	This shows that $\sym{U}_{n,w}=\sym{U}_{n,2^n-w}$.
\end{proof}

\subsection{Equivalence \label{subsec-eq}}
Two Boolean functions are equivalent if they have the same number of variables and one can be obtained from the other by a 
permutation of variables. Let $\mathcal{P}$ be a property of Boolean functions. The set $\mathcal{P}$ is partitioned into equivalence classes by the notion of
equivalence. For $n\geq 0$, let $[P]_n$ denote the number of equivalence classes of $n$-variable functions possessing the property $\mathcal{P}$.
Also, let $\sym{nd}\mbox{-}[P]_n$ denote the number of equivalence classes of non-degenerate $n$-variable functions possessing the property $\mathcal{P}$.

\begin{remark}\label{rem-n=0=eq}
	We assume that for $n=0$, there are two equivalence classes of $n$-variable, non-degenerate, monotone (and hence unate), and unbalanced Boolean functions given 
	by $[0]$ and $[1]$. 
\end{remark}

We have the following analogue of Proposition~\ref{prop-nd}.
\begin{proposition}\label{prop-nd-eq}
	Let $\mathcal{P}$ be a property of Boolean functions which is closed under permutation of variables (i.e. if $f$ is in $\mathcal{P}$ and $g$ is obtained
	from $f$ by applying a permutation to the variables, then $g$ is also in $\mathcal{P}$). Then 
	\begin{eqnarray}\label{eqn-nd-eq}
		[P]_n & = & \sum_{i=0}^n \sym{nd}\mbox{-}[P]_i.
	\end{eqnarray}
	Consequently, 
	\begin{eqnarray}\label{eqn-nd-eq1}	
		\sym{nd}\mbox{-}[P]_n & = & [P]_n - [P]_{n-1}.
	\end{eqnarray}
\end{proposition}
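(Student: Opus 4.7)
The plan is to partition the equivalence classes of $n$-variable functions in $\mathcal{P}$ by the number $i$ of variables on which the function is non-degenerate. Since any permutation of variables merely relabels the non-degenerate coordinates, this count is invariant on each equivalence class. Writing $c_i$ for the number of equivalence classes with exactly $i$ non-degenerate variables, the identity $[P]_n = \sum_{i=0}^{n} c_i$ is immediate; the substantive task is then to prove $c_i = \sym{nd}\mbox{-}[P]_i$ for each $0 \leq i \leq n$, which after summation gives \eqref{eqn-nd-eq}.

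I would establish $c_i = \sym{nd}\mbox{-}[P]_i$ via an explicit bijection. For the forward map, pick a representative $f$ of a class on the left, choose any $\pi \in S_n$ sending the non-degenerate positions of $f$ to $\{1,\ldots,i\}$, and let $\tilde{f}$ be the resulting function viewed as an $i$-variable function. Different choices of $\pi$ produce cores that differ by an element of $S_i$, and two equivalent representatives of the original class yield $S_i$-equivalent cores, so $[f] \mapsto [\tilde{f}]$ is well-defined. The inverse map sends the equivalence class of a non-degenerate $i$-variable $h \in \mathcal{P}$ to the equivalence class of the $n$-variable function $\widehat{h}(x_1,\ldots,x_n) = h(x_1,\ldots,x_i)$ obtained by appending $n-i$ dummy variables. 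A direct check shows these two maps are mutual inverses. Crucially, and in contrast with Proposition~\ref{prop-nd}, no factor ${n \choose i}$ appears here: all ${n \choose i}$ ways of choosing which positions carry the non-degenerate variables are merged into a single equivalence class under $S_n$.

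Once \eqref{eqn-nd-eq} is established, \eqref{eqn-nd-eq1} follows immediately by subtracting the instance of \eqref{eqn-nd-eq} at level $n-1$ from the instance at level $n$.

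The main (mild) obstacle is verifying that both directions of the bijection preserve membership in $\mathcal{P}$. Closure of $\mathcal{P}$ under permutation is given by hypothesis, but the forward and inverse maps additionally require that $\mathcal{P}$ be closed under stripping, respectively appending, dummy variables. This needs to be stated or at least noted as an implicit assumption; it is nevertheless routine to verify for every concrete property used in the paper (monotone, balanced, unate, and any intersection thereof), so the identities are safely applied throughout.
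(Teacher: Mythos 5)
The paper states this proposition without any proof (it is presented merely as ``the analogue of Proposition~\ref{prop-nd}''), so there is no authors' argument to compare against; your write-up supplies the intended argument and it is correct. Partitioning the equivalence classes by the number of essential variables and then giving the strip-and-relabel bijection onto classes of non-degenerate $i$-variable functions is exactly the right mechanism, and your remark that the factor ${n\choose i}$ collapses to $1$ under the $S_n$-action is the heart of why the binomial transform of Proposition~\ref{prop-nd} degenerates to a plain sum here. Your closing caveat is also well taken: both this proposition and Proposition~\ref{prop-nd} tacitly require $\mathcal{P}$ to be closed under appending and deleting inessential variables, not merely under permutation of variables; this holds for every property the paper uses (monotone, unate, balanced, and their intersections) but is nowhere stated.
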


For $n\geq 0$, let $[A]_n$ denote the number of equivalence classes of $n$-variable Boolean functions and $[B]_n$ denote the number of equivalence classes of $n$-variable
balanced Boolean functions. The values of $[A]_n$ and $[B]_n$ can be obtained using Polya's theory (see for example~\cite{RT09}). Let
$\sym{nd}\mbox{-}[A]_n$ denote the number of equivalence classes of $n$-variable non-degenerate Boolean functions and $\sym{nd}\mbox{-}[B]_n$ denote the
number of equivalence classes of $n$-variable non-degenerate balanced Boolean functions. Using Proposition~\ref{prop-nd-eq}, 
\begin{eqnarray}\label{eqn-all-bal-eq}
	\sym{nd}\mbox{-}[A]_n=[A]_n - [A]_{n-1} & \mbox{and} & \sym{nd}\mbox{-}[B]_n=[B]_n - [B]_{n-1}.
\end{eqnarray}

For $n\geq 0$, by $[M]_n$, $[BM]_n$, $[U]_n$ and $[BU]_n$ we will denote the numbers of equivalence classes of $n$-variable monotone, balanced-monotone, unate, and 
balanced-unate functions respectively and by 
$\sym{nd}\mbox{-}[M]_n$, $\sym{nd}\mbox{-}[BM]_n$, $\sym{nd}\mbox{-}[U]_n$ and $\sym{nd}\mbox{-}[BU]_n$ 
we will denote the corresponding numbers of equivalence classes of non-degenerate functions. 
The following result is the analogue of Proposition~\ref{prop-m-u}. 
\begin{proposition}\label{prop-m-u-eq}
For $n\geq 0$, the following holds.
	\begin{eqnarray}
		\sym{nd}\mbox{-}[U]_n & \leq & 2^n \cdot \sym{nd}\mbox{-}[M]_n, \label{eqn-m-u-nd-eq} \\
		\sym{nd}\mbox{-}[BU]_n & \leq & 2^n \cdot \sym{nd}\mbox{-}[BM]_n, \label{eqn-m-u-nd-b-eq} \\
		\mbox{$[U]$}_n & \leq & 2^n \cdot [M]_n, \label{eqn-m-u-eq} \\
		\mbox{$[BU]$}_n & \leq & 2^n \cdot [BM]_n, \label{eqn-m-u-b-eq} 
	\end{eqnarray}
\end{proposition}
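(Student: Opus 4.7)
The plan is to mirror the argument for Proposition~\ref{prop-m-u}, lifting the shift correspondence $m \mapsto m_{\bm{\alpha}}$ from individual functions to equivalence classes. The starting observation, already implicit in the proof of Proposition~\ref{prop-m-u}, is that every $n$-variable unate function $u$ can be written as $u = m_{\bm{\alpha}}$ for some monotone $m$ and some $\bm{\alpha}\in\{0,1\}^n$: take $\alpha_i=0$ if $u$ is monotone increasing in $x_i$ and $\alpha_i=1$ if it is monotone decreasing, so that $u_{\bm{\alpha}} = m$ is monotone in every variable. Shifting preserves weight (so balancedness) and non-degeneracy, hence $m$ inherits from $u$ whichever of these additional properties is under consideration.

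To combine shifts with variable-permutations, I would record one identity. For a permutation $\pi$ of $\{1,\ldots,n\}$, write $m^\pi(x_1,\ldots,x_n) = m(x_{\pi(1)},\ldots,x_{\pi(n)})$ and, for $\bm{\alpha}\in\{0,1\}^n$, write $\bm{\alpha}^\pi$ for the componentwise-permuted vector. A direct calculation using $(x\oplus\bm{\alpha})^\pi = x^\pi \oplus \bm{\alpha}^\pi$ yields the commutation law
\begin{eqnarray*}
(m^\pi)_{\bm{\alpha}} & = & (m_{\bm{\alpha}^\pi})^\pi.
\end{eqnarray*}

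With this in hand, I would prove~\eqref{eqn-m-u-eq} as follows. Fix representatives $m_1,\ldots,m_{[M]_n}$ of the equivalence classes of $n$-variable monotone functions, and consider the $2^n \cdot [M]_n$ unate functions of the form $(m_i)_{\bm{\beta}}$ with $i\in\{1,\ldots,[M]_n\}$ and $\bm{\beta}\in\{0,1\}^n$. I claim that every equivalence class of $n$-variable unate functions contains at least one such $(m_i)_{\bm{\beta}}$. Indeed, given unate $u$, write $u = m_{\bm{\alpha}}$ with $m$ monotone, and pick the representative $m_i$ with $m \sim m_i$, say $m_i = m^\pi$, equivalently $m = m_i^{\pi^{-1}}$. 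Applying the commutation law to $m_i^{\pi^{-1}}$ gives
\begin{eqnarray*}
u \;=\; m_{\bm{\alpha}} \;=\; (m_i^{\pi^{-1}})_{\bm{\alpha}} \;=\; \bigl((m_i)_{\bm{\alpha}^{\pi^{-1}}}\bigr)^{\pi^{-1}},
\end{eqnarray*}
so $u$ is equivalent (via $\pi^{-1}$) to $(m_i)_{\bm{\beta}}$ with $\bm{\beta}=\bm{\alpha}^{\pi^{-1}}$. Thus the listing above is surjective onto equivalence classes of unate functions, giving $[U]_n \leq 2^n\cdot[M]_n$.

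The remaining inequalities~\eqref{eqn-m-u-nd-eq},~\eqref{eqn-m-u-nd-b-eq}, and~\eqref{eqn-m-u-b-eq} follow by running exactly the same argument inside the appropriate restricted classes: because $u = m_{\bm{\alpha}}$ and $m = u_{\bm{\alpha}}$ have the same weight and the same set of essential variables, $u$ is balanced (resp.\ non-degenerate) if and only if $m$ is, and $(m_i)_{\bm{\beta}}$ inherits these properties from $m_i$. So restricting the representatives $m_1,\ldots$ to equivalence classes of balanced, non-degenerate, or balanced-non-degenerate monotone functions yields surjections onto the corresponding classes of unate functions with at most $2^n$ pairs per monotone class. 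The main thing to get right is the commutation identity between the permutation action and the XOR-shift; once that is set up consistently, the rest is bookkeeping.
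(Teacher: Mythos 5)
Your proof is correct. Note that the paper states Proposition~\ref{prop-m-u-eq} without any proof at all, presenting it only as "the analogue of Proposition~\ref{prop-m-u}", so there is no argument to compare against; your write-up supplies exactly the reasoning the authors evidently have in mind, and the commutation identity $(m^\pi)_{\bm{\alpha}} = (m_{\bm{\alpha}^\pi})^\pi$ is the one detail genuinely needed to pass from the function-level shift correspondence of Proposition~\ref{prop-m-u} to a surjection onto equivalence classes. The paper's follow-up remark (the AND-function example with two translations yielding equivalent unate functions) is consistent with your argument: the map from pairs $(m_i,\bm{\beta})$ onto unate classes is surjective but need not be injective, which is precisely why these are inequalities rather than the equalities obtained in \eqref{eqn-m-u-nd} and \eqref{eqn-m-u-b-nd}.
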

The relations given by~\eqref{eqn-m-u-eq} and~\eqref{eqn-m-u-b-eq} are analogues of~\eqref{eqn-m-u} and~\eqref{eqn-m-u-b} respectively. 
However, unlike~\eqref{eqn-m-u-nd} and~\eqref{eqn-m-u-b-nd}, we do not have equalities in~\eqref{eqn-m-u-nd-eq} and~\eqref{eqn-m-u-nd-b-eq}.
The reason is that two distinct input translations of a non-degenerate monotone function can lead to two unate functions which are equivalent.
An example is the following. Suppose $f(X_1,X_2)=X_1X_2$, i.e. $f$ is the AND function. Let $g(X_1,X_2)=f(1\oplus X_1, X_2)=(1\oplus X_1)X_2$ and 
$h(X_1,X_2)=f(X_1,1\oplus X_2)=X_1(1\oplus X_2)$. Then $g(X_1,X_2)=h(X_2,X_1)$, i.e. $g$ and $h$ are distinct, but equivalent unate functions obtained 
by distinct input translations from the monotone function $f$.

\section{Counting Functions \label{sec-fn-cnt}}
In this section, we consider the problem of counting various sub-classes of monotone and unate Boolean functions.

\subsection{Monotone Functions \label{subsec-fn-cnt-monotone}}
Note that $\sym{M}_n$ is the $n$-th Dedekind number. For $0\leq n\leq 9$, the
values of $\sym{M}_n$ are known~\cite{ListINtSequence}, with the value of $\sym{M}_9$ being obtained recently and independently by two groups of 
researchers~\cite{vanhirtum2023computation,jakel2023computation}.
The values of $\sym{M}_n$ form entry A000372 of~\cite{ListINtSequence}. The numbers of non-degenerate $n$-variable monotone functions, $\sym{nd}\mbox{-}\sym{M}_n$,
form entry A006126 of~\cite{ListINtSequence}. 

We used enumeration to obtain the number $\sym{BM}_n$ of $n$-variable balanced monotone functions. For $n\leq 6$, we enumerated all monotone functions and
counted only the balanced functions. Our strategy for enumerating monotone functions is based on Proposition~\ref{fact_mono}. The approach is the following.
First generate all $1$-variable monotone functions and store these. For $n\geq 2$, to generate all $n$-variable monotone functions, we consider each 
pair $(g,h)$ of $(n-1)$-variable monotone functions and check whether the pair satisfies the condition of Proposition~\ref{fact_mono}. If it does, then $f=g||h$ is
stored. To generate all $n$-variable monotone functions, this approach requires considering $(\sym{M}_{n-1})^2$ pairs. The enumeration and filtering out unbalanced functions
allows us to obtain the values of $\sym{BM}_n$, for $n=1,\ldots,6$.

\begin{remark}\label{rem-nd-not}
	To obtain a faster method, one may consider generating only non-degenerate functions using Proposition~\ref{fact_mono}. This, however, does not work. It is indeed
	true that if $g$ and $h$ are distinct non-degenerate functions, $f=g||h$ is also non-degenerate. On the other hand, it is possible that one of $g$ or $h$ is degenerate,
	but $f$ is non-degenerate. For example, take $g$ to be the 2-variable constant function whose string representation is $0000$, and $h$ to be the 2-variable AND function
	whose string representation is given by $0001$. Then the string representation of the 3-variable function $f=g||h$ is $00000001$ which is the AND of the three
	variables and hence non-degenerate. So the set of all non-degenerate $n$-variable monotone functions cannot be obtained by concatenating only non-degenerate
	$(n-1)$-variable monotone functions.
\end{remark}

To obtain $\sym{BM}_7$ we used a faster method. After enumerating all 6-variable monotone functions, we divided these functions into groups where all functions in the
same group have the same weight. Our modified strategy is to take two $n$-variable monotone functions $g$ and $h$, where $g$ has weight $w$ and $h$ has weight $2^n-w$ 
and check whether $g\leq h$. If the check passes, then we generate the $(n+1)$-variable balanced monotone function $f=g||h$. 
Recall that for $0\leq w\leq 2^n$, there are $\sym{M}_{n,w}$ $n$-variable monotone functions having weight $w$. The number of pairs needed to be considered by the
modified method is
$$
\sum_{w=0}^{2^n} \sym{M}_{n,w}\sym{M}_{n,2^n-w} = \sum_{w=0}^{2^n} \left(\sym{M}_{n,w} \right)^2,
$$
where the equality follows from Proposition~\ref{prop4}. Substituting $n=6$ and using the values of $\sym{M}_{6,w}$ obtained through enumeration, we find that the 
modified strategy for generating 7-variable balanced monotone
functions requires considering $\sum_{w=0}^{64}\left(\sym{M}_{6,w} \right)^2\approx 2^{40}$ pairs, while the previous strategy would have required considering
$(\sym{M}_6)^2\approx 2^{45}$ pairs. 

\begin{remark}\label{rem-once}
Note that the above procedure to generate balanced monotone functions can be applied only once. It uses the set of all
$n$-variable monotone functions to generate the set of all $(n+1)$-variable balanced monotone functions. Since this does not provide all $(n+1)$-variable monotone
functions, it cannot be applied to generate the set of all $(n+2)$-variable balanced monotone functions. 
\end{remark}

Having obtained $\sym{BM}_n$, for $n=1,\ldots,7$, we use Proposition~\ref{prop-nd} to obtain the values of $\sym{nd}\mbox{-}\sym{BM}_n$, i.e. the number of 
$n$-variable non-degenerate balanced monotone functions. The obtained values of $\sym{BM}_n$ and $\sym{nd}\mbox{-}\sym{BM}_n$ are given in Table~\ref{tab-b-m}.

\begin{table}
\centering
	{\scriptsize
	\begin{tabular}{|c|r|r|}
		\hline
		$n$ & \multicolumn{1}{c|}{$\sym{BM}_n$} & \multicolumn{1}{c|}{$\sym{nd}\mbox{-}\sym{BM}_n$} \\ \hline
		0 & 0 & 0\\ \hline
		1 & 1 & 1\\ \hline
		2 & 2 & 0\\ \hline
		3 & 4 & 1\\ \hline
		4 & 24 & 16\\ \hline
		5 & 621 & 526  \\ \hline
		6 & 492288 & 488866  \\ \hline
		7 & 81203064840 & 81199631130 \\ \hline
	\end{tabular}
	\caption{Numbers of $n$-variable balanced monotone and non-degenerate balanced monotone functions for $0\leq n\leq 7$. \label{tab-b-m}}
	}
\end{table}

\subsection{Unate Functions \label{subsec-unate-cnt}}
The problem of counting unate functions reduces to the problem of counting monotone functions in the following manner. Suppose we wish to obtain the number
$\sym{U}_n$ of $n$-variable unate functions. Using Proposition~\ref{prop-nd}, this reduces to the problem of obtaining $\sym{nd}\mbox{-}\sym{U}_i$, for 
$0\leq i\leq n$. From~\eqref{eqn-m-u-nd}, this reduces to the problem of obtaining $\sym{nd}\mbox{-}\sym{M}_i$ for $0\leq i\leq n$. Using another application
of Proposition~\ref{prop-nd} reduces the problem of obtaining $\sym{nd}\mbox{-}\sym{M}_i$ to that of obtaining $\sym{M}_j$ for $0\leq j\leq i$. So to obtain
$\sym{U}_n$, it is sufficient to know $\sym{M}_i$ for $0\leq i\leq n$. Since the values of $\sym{M}_i$ are known for $0\leq i\leq 9$, we can obtain the
values of $\sym{U}_n$ for $0\leq n\leq 9$. From these values, using Proposition~\ref{prop-nd}, we obtain the values of $\sym{nd}\mbox{-}\sym{U}_n$ for
$0\leq n\leq 9$. The values of $\sym{U}_n$ and $\sym{nd}\mbox{-}\sym{U}_n$ are shown in Table~\ref{tab-unate}.

\begin{table}
\centering
	{\scriptsize
	\begin{tabular}{|c|r|r|}
		\hline
		$n$ & \multicolumn{1}{c|}{$\sym{U}_n$} & \multicolumn{1}{c|}{$\sym{nd}\mbox{-}\sym{U}_n$} \\ \hline
		0 &2 & 2\\ \hline
		1 & 4& 2\\ \hline
		2 &14 & 8\\ \hline
		3 & 104 & 72\\ \hline
		4 &2170 & 1824\\ \hline
		5 &230540 & 220608\\ \hline
		6 & 499596550 & 498243968\\ \hline
		7 & 309075799150640& 309072306743552\\ \hline
		8 &14369391928071394429416818 &14369391925598802012151296 \\ \hline
		9 & 146629927766168786368451678290041110762316052 & 146629927766168786239127150948525247729660416\\ \hline
	\end{tabular}
	\caption{Numbers of $n$-variable unate and non-degenerate unate functions for $0\leq n\leq 9$. \label{tab-unate}}
	}
\end{table}

In a similar manner, using Proposition~\ref{prop-nd} and~\eqref{eqn-m-u-b-nd}, the problem of counting balanced unate functions can be reduced to the problem
of counting balanced monotone functions. Since we have obtained the values of $\sym{BM}_i$ for $0\leq i\leq 7$, we obtain the values of 
$\sym{BU}_n$ for $0\leq n\leq 7$. Using Proposition~\ref{prop-nd}, this gives us the values of $\sym{nd}\mbox{-}\sym{BU}_n$ for $0\leq n\leq 7$. 
The values of $\sym{BU}_n$ and $\sym{nd}\mbox{-}\sym{BU}_n$ are shown in Table~\ref{tab-bal-unate}.

\begin{table}
\centering
	{\scriptsize
	\begin{tabular}{|c|r|r|}
		\hline
		$n$ & \multicolumn{1}{c|}{$\sym{BU}_n$} & \multicolumn{1}{c|}{$\sym{nd}\mbox{-}\sym{BU}_n$} \\ \hline
		0 &0 &0 \\ \hline
		1 & 2 & 2 \\ \hline
		2 &4 &0 \\ \hline
		3 &14 &8 \\ \hline
		4 &296 &256 \\ \hline
		5 &18202 & 16832\\ \hline
		6 & 31392428& 31287424\\ \hline
		7 & 10393772159334 &10393552784640 \\ \hline
	\end{tabular}
	\caption{Numbers of $n$-variable balanced unate and non-degenerate balanced unate functions for $0\leq n\leq 7$. \label{tab-bal-unate}}
	}
\end{table}


\section{Counting Equivalence Classes of Functions \label{sec-enum} }
In this section, we present the results on numbers of equivalence classes of various subsets of monotone and unate functions.

\subsection{Filtering Procedure \label{subsec-filter}}
The basic problem of enumerating equivalence classes is the following. 
Let $\mathcal{S}$ be a subset of the set of all $n$-variable Boolean functions. Given $\mathcal{S}$, we wish to generate a set $\mathcal{T}\subseteq \mathcal{S}$ of functions
such that no two functions in $\mathcal{T}$ are equivalent, and each function in $\mathcal{S}$ is equivalent to some function in $\mathcal{T}$. The technique for such
filtering is the following. 

Given a permutation $\pi$ of $\{1,\ldots,n\}$, we define a permutation $\pi^{\star}$ of $\{0,\ldots,2^n-1\}$ as follows. For $i\in \{0,\ldots,2^n-1\}$, let 
$(i_1,\ldots,i_n)$ be the $n$-bit binary representation of $i$. Then $\pi^{\star}(i)=j$, where the $n$-bit binary representation of $j$ is
$(i_{\pi(1)},\ldots,i_{\pi(n)})$. 
Given an $n$-variable function $f$, let $f^\pi$ denote the function such that for all $(x_1,\ldots,x_n)\in\{0,1\}^n$, $f^\pi(x_1,\ldots,x_n)=f(x_{\pi(1)},\ldots,x_{\pi(n)})$.
Suppose $f_0\cdots f_{2^n-1}$ is the bit string representation of $f$. Then the bit string representation of $f^\pi$ is
$f_{\pi^\star(0)}\cdots f_{\pi^\star(2^n-1)}$. 

Note that for each permutation $\pi$, the permutation $\pi^\star$ can be pre-computed and stored as an array say $P[0,\ldots,2^n-1]$. Suppose the bit string
representation of $f$ is stored as an array $A[0,\ldots,2^n-1]$. Then the bit string representation of $f^\pi$ is obtained as the array $B[0,\ldots,2^n-1]$, where
$B[i]=A[P[i]]$, for $i=0,\ldots,2^n-1$. So obtaining $f^{\pi}$ becomes simply a matter of array reindexing. 

Consider the set of functions $\mathcal{S}$ to be filtered is given as a list of string representations of the functions. We incrementally generate $\mathcal{T}$ as 
follows. The first function
in $\mathcal{S}$ is moved to $\mathcal{T}$. We iterate over the other functions in $\mathcal{S}$. For a function $f$ in $\mathcal{S}$, we generate
$f^\pi$ for all permutations $\pi$ of $\{1,\ldots,n\}$ using the technique described above. For each such $f^\pi$, we check whether it is present in $\mathcal{T}$.
If none of the $f^{\pi}$'s are present in $\mathcal{T}$, then we append $f$ to $\mathcal{T}$. At the end of the procedure, $\mathcal{T}$ is the desired set of functions.

The check for the presence of $f^\pi$ in $\mathcal{T}$ involves a search in $\mathcal{T}$. This is done using binary search. To apply binary search on a list, it 
is required that the list be sorted. To ensure this, we initially ensure that $\mathcal{S}$ is sorted (either by generating it in a sorted manner, or by sorting
it after generation). This ensures that at any point of time, $\mathcal{T}$ is also a sorted list, so that binary search can be applied. 

\subsection{Monotone \label{subsec-monotone-eq}}
For $n\geq 0$, the numbers $[M]_n$ of equivalence classes of $n$-variable monotone functions form entry A003182 of~\cite{ListINtSequence}. Using Proposition~\ref{prop-nd-eq}, it
is possible to find the numbers $\sym{nd}\mbox{-}[M]_n$ of equivalence classes of $n$-variable monotone functions. These values are shown in Table~\ref{tab-m-eq}.

\begin{table}
\centering
	{\scriptsize
	\begin{tabular}{|c|r|}
		\hline
		$n$ & \multicolumn{1}{c|}{$\sym{nd}\mbox{-}[M]_n$} \\ \hline
		0 & 2\\ \hline
		1 & 1\\ \hline
		2 & 2\\ \hline
		3 & 5\\ \hline
		4 & 20\\ \hline
		5 & 180\\ \hline
		6 & 16143\\ \hline
		7 & 489996795\\ \hline
		8 & 1392195548399980210\\ \hline
		9 & 789204635842035039135545297410259322\\ \hline
	\end{tabular}
	\caption{Numbers of equivalence classes of $n$-variable non-degenerate monotone functions for $0\leq n\leq 9$. \label{tab-m-eq}}
	}
\end{table}

For $0\leq n\leq 6$, the numbers $[BM]_n$ of equivalence classes of $n$-variable balanced monotone functions are obtained by applying the filtering
procedure described in Section~\ref{subsec-filter} to the strategy for generating balanced monotone functions described in Section~\ref{subsec-fn-cnt-monotone}.
Next applying Proposition~\ref{prop-nd-eq}, we obtained the numbers $\sym{nd}\mbox{-}[BM]_n$ of equivalence classes of $n$-variable non-degenerate balanced monotone
functions. The values of $[BM]_n$ and $\sym{nd}\mbox{-}[BM]_n$ are shown in Table~\ref{tab-b-m-eq}.

We briefly consider the computation required to obtain $[BM]_{7}$.
From Table~\ref{tab-b-m}, $\sym{BM}_{7}\allowbreak =\allowbreak 81203064840\allowbreak \approx\allowbreak  2^{36.24}$. For each 7-variable balanced monotone function $f$, 
it is required to consider $7!=5040\approx 2^{12.29}$ functions $f^{\pi}$ for the $7!$ permutations $\pi$ of $\{1,\ldots,7\}$. So a total
of about $2^{48.53}$ functions would have to be considered. For each of these functions a binary search is required on the partially generated
set of functions $\mathcal{T}$ and requires performing about $\log_2\#\mathcal{T}$ comparisons. So the total number of comparisons required is somewhat above $2^{50}$. 
This amount of computation is not presently feasible on the computing resources available to us.

\begin{table}
\centering
	{\scriptsize
	\begin{tabular}{|c|r|r|}
		\hline
		$n$ & \multicolumn{1}{c|}{$[BM]_n$} & \multicolumn{1}{c|}{$\sym{nd}\mbox{-}[BM]_n$} \\ \hline
		0 &0 & 0\\ \hline
		1 & 1& 1\\ \hline
		2 & 1& 0\\ \hline
		3 & 2& 1\\ \hline
		4 & 4& 2\\ \hline
		5 & 16&12 \\ \hline
		6 & 951&935 \\ \hline
	\end{tabular}
	\caption{Numbers of equivalence classes of $n$-variable balanced monotone and non-degenerate balanced monotone functions for $0\leq n\leq 6$. \label{tab-b-m-eq}}
	}
\end{table}

\subsection{Unate \label{subsec-unate-eq}}


In the case of counting functions, the problems of counting unate and balanced unate functions reduce to the problems of counting monotone and balanced monotone
functions respectively. In the case of counting equivalence classes of functions, such reduction is no longer possible (using the results that we could prove).
	The reason is that unlike~\eqref{eqn-m-u-nd} which expresses the number of non-degenerate unate functions in terms of the number of non-degenerate
	monotone function, the relation~\eqref{eqn-m-u-nd-eq} only provides an upper bound on the number of equivalence classes of non-degenerate unate
	functions in terms of the number of equivalence classes of non-degenerate monotone functions. 
	
In view of the above, for counting equivalence classes of unate functions, we resorted to the technique of enumerating unate functions and then using the
	technique described in Section~\ref{subsec-filter} to obtain the number of equivalence classes. 

	The technique of generating all unate functions is based on Proposition~\ref{unate_proposition}. Along with the string representation of a unate
	function, we also need to record whether the function is increasing or decreasing in each of its variables. This is recorded as the signature of the function.
	The special cases of the two constant functions cause some complications in the definition of the signature. 

For an $n$-variable unate function $f$, we define its signature, denoted $\sym{sig}(f)$, to be an element of $\{0,1\}^n\cup \{\mathfrak{z},\mathfrak{o}\}$ in the 
following manner. If $f$ is the constant function $1$, then $\sym{sig}(f)=\mathfrak{o}$, if $f$ is the constant function $0$, then 
$\sym{sig}(f)=\mathfrak{z}$; otherwise $\sym{sig}(f)$ is an $n$-bit string $\alpha$, where for $i=1,\ldots,n$, 
$\alpha_i=1$ if $f$ is monotone increasing in the variable $x_i$, and $\alpha_i=0$ if $f$ is monotone decreasing in the variable $x_i$. 
The signature $\sym{sig}(f)$ encodes whether $f$ is monotone increasing or monotone decreasing on each variable. The function $f$ is both monotone increasing
and monotone decreasing in all the variables if and only if it is a constant function. The signatures of the constant functions are defined appropriately.

For enumeration, the bit string representation of the functions are used. A unate function and its signature are stored as a pair. 
Consider the following recursive algorithm to generate all $n$-variable unate functions and their signatures for $n\geq 1$. 
At the base step, i.e. for $n=1$, store the four pairs of 1-variable unate functions and their signatures as $(00,\mathfrak{z})$, 
$(01,1)$, $(10,0)$ and $(11,\mathfrak{o})$. Suppose that for some $n\geq 1$, we have already generated all $n$-variable
unate functions and their signatures. The generation of all $(n+1)$-variable unate functions and their signatures are done as follows. For any two
function-signature pairs $(g,\sym{sig}(g))$ and $(h,\sym{sig}(h))$, where $g$ and $h$ are $n$-variable unate functions (which are not necessarily distinct), perform the 
following checks: 
\begin{enumerate}
	\item Whether at least one of $\sym{sig}(g)$ or $\sym{sig}(h)$ is equal to either $\mathfrak{z}$ or $\mathfrak{o}$ (i.e. whether at least one of
		$g$ or $h$ is a constant function).
	\item $\sym{sig}(g)=\sym{sig}(h)=\alpha$, and either $g\leq h$ or $h\leq g$ holds. 
\end{enumerate}
If either of the checks pass, then generate $f=g||h$, and determine $\sym{sig}(f)$ as follows.
\begin{eqnarray}\label{eqn-sig-f}
	\sym{sig}(f) 
	& = & \left\{
	\begin{array}{ll}
		\mathfrak{z} & \mbox{if } \sym{sig}(g)=\sym{sig}(h)=\mathfrak{z}, \\
		\mathfrak{o} & \mbox{if } \sym{sig}(g)=\sym{sig}(h)=\mathfrak{o}, \\
		1^{n+1}   & \mbox{if } \sym{sig}(g)=\mathfrak{z},\  \sym{sig}(h)=\mathfrak{o}, \\
		0^{n+1}   & \mbox{if } \sym{sig}(g)=\mathfrak{o},\  \sym{sig}(h)=\mathfrak{z}, \\
		1||\alpha & \mbox{if } \sym{sig}(g)=\mathfrak{z},\  \sym{sig}(h)=\alpha\in \{0,1\}^n, \\
		0||\alpha & \mbox{if } \sym{sig}(g)=\mathfrak{o},\  \sym{sig}(h)=\alpha\in \{0,1\}^n \\
		1||\alpha & \mbox{if } \sym{sig}(g)=\alpha\in \{0,1\}^n,\  \sym{sig}(h)=\mathfrak{o}, \\
		0||\alpha & \mbox{if } \sym{sig}(g)=\alpha\in \{0,1\}^n,\  \sym{sig}(h)=\mathfrak{z}, \\
		1||\alpha & \mbox{if } g\leq h,\  \sym{sig}(g)=\sym{sig}(h)=\alpha \in \{0,1\}^n, \\
		0||\alpha & \mbox{if } g\geq h,\  \sym{sig}(g)=\sym{sig}(h)=\alpha \in \{0,1\}^n.
	\end{array}
	\right.
\end{eqnarray}
Store $(f,\sym{sig}(f))$. Proposition~\ref{unate_proposition} assures us that this recursive procedure generates all $(n+1)$-variable unate functions and their signatures. 

To generate all $(n+1)$-variable unate functions, the above method requires considering all pairs of $n$-variable unate functions, i.e. a total of
$\left(U(n)\right)^2$ options. Applying the fitering strategy of Section~\ref{subsec-filter} we obtain the value of 
$[U]_n$. Next using Proposition~\ref{prop-nd-eq} we obtain the value of $\sym{nd}\mbox{-}[U]_n$. 
We could perform this computation for $n\leq 6$. The obtained values of $[U]_n$ and $\sym{nd}\mbox{-}[U]_n$ are shown in Table~\ref{tab-u-eq}.
To generate all 7-variable unate functions using this option requires considering $\left(U(6)\right)^2\approx 2^{57.8}$ pairs of functions. This is not feasible on the
computing facility available to us.

\begin{table}
\centering
	{\scriptsize
	\begin{tabular}{|c|r|r|}
		\hline
		$n$ & \multicolumn{1}{c|}{$[U]_n$} & \multicolumn{1}{c|}{$\sym{nd}\mbox{-}[U]_n$} \\ \hline
		0 &2 & 2\\ \hline
		1 & 4& 2\\ \hline
		2 &10 &6 \\ \hline
		3 & 34& 24\\ \hline
		4 & 200&166 \\ \hline
		5 &3466 &3266 \\ \hline
		6 & 829774& 826308\\ \hline
	\end{tabular}
	\caption{Numbers of equivalence classes of $n$-variable unate and non-degenerate unate functions for $0\leq n\leq 6$. \label{tab-u-eq}}
	}
\end{table}

To obtain the set of $n$-variable balanced unate functions, after generating the set of all $n$-variable unate functions, we remove the ones that are unbalanced. Then to the
resulting set, we apply the technique of Section~\ref{subsec-filter} to obtain the number $[BU]_n$ of equivalence classes of $n$-variable balanced unate functions. 
Subsequently, we apply Proposition~\ref{prop-nd-eq} to obtain the number $\sym{nd}\mbox{-}[BU]_n$ of equivalence classes of $n$-variable non-degenerate balanced
unate functions. The values of $[BU]_n$ and $\sym{nd}\mbox{-}[BU]_n$ are shown in Table~\ref{tab-u-b-eq}

\begin{table}
\centering
	{\scriptsize
	\begin{tabular}{|c|r|r|}
		\hline
		$n$ & \multicolumn{1}{c|}{$[BU]_n$} & \multicolumn{1}{c|}{$\sym{nd}\mbox{-}[BU]_n$} \\ \hline
		0 & 0& 0\\ \hline
		1 & 2& 2\\ \hline
		2 & 2 &0 \\ \hline
		3 & 6& 4\\ \hline
		4 & 24& 18 \\ \hline
		5 & 254& 230\\ \hline
		6 & 50172&49918 \\ \hline
	\end{tabular}
	\caption{Numbers of equivalence classes of $n$-variable balanced unate and non-degenerate balanced unate functions for $0\leq n\leq 6$. \label{tab-u-b-eq}}
	}
\end{table}


\section{Concluding Remarks \label{sec-conclu}}
We have obtained the numbers of $n$-variable unate and monotone functions possessing a combination of some basic properties. Also, we have obtained the numbers
of equivalence classes of $n$-variable unate and monotone functions also possessing a combination of those same properties. Our work raises a number of questions that
may be pursued in the future. One such question is whether the techniques for counting monotone functions from the recent works~\cite{jakel2023computation,vanhirtum2023computation} can be applied to the
problem of counting balanced monotone functions. Another similar question is whether the techniques for counting the number of equivalence classes of monotone functions
from~\cite{pawelski2021number,pawelski2023number} can be applied to the problem of counting the number of equivalence classes of balanced monotone functions. A third question is whether the
techniques for counting the number of equivalence classes of monotone functions from~\cite{pawelski2021number,pawelski2023number} can be applied to the problem of counting the number of equivalence
classes of unate functions. Positive answers to these questions will allow extending the results that we could obtain up to $n=6$ or $n=7$ to $n=9$. 

\section*{Acknowledgement} We are grateful to an anonymous reviewer of the PhD thesis of the first author for suggesting that the number of $n$-variable non-degenerate unate
functions is equal to $2^n$ times the number of $n$-variable non-degenerate monotone functions. 


\end{document}